\newcommand{\si}{\sigma}
\newcommand{\la}{\lambda}
\newcommand{\ol}{\overline}
\newcommand{\pa}{\partial}
\newcommand{\La}{\Lambda}
\newcommand{\al}{\alpha}
\newcommand{\be}{\beta}
\newcommand{\de}{\delta}
\newcommand{\ga}{\gamma}
\newcommand{\ka}{\kappa}
\newcommand{\cd}{\cdot}
\newcommand{\diag}{\operatornamewithlimits{diag}}
\newcommand{\R}{{\mathbb R}}
\renewcommand{\(}{\left(}
\renewcommand{\)}{\right)}
\newcommand{\hw}{\hat{w}}
\renewcommand{\th}{\theta}
\newtheorem{theorem}{\bf Theorem}[section]
\newtheorem{lemma}[theorem]{\bf Lemma}
\newtheorem{proposition}[theorem]{\bf Proposition}
\theoremstyle{remark}
  \newtheorem{remark}[theorem]{\sc Remark}
\theoremstyle{definition}
  \newtheorem{example}[theorem]{\sc Example}
\numberwithin{equation}{section}
\begin{document}


\title{On the wave-like energy estimates of Klein-Gordon type equations with time dependent potential}
\author{
Kazunori Goto
\footnote{Graduate School of Sciences and Technology for Innovation, 
Yamaguchi University, Japan; e-mail: b003vbv@yamaguchi-u.ac.jp}
and
Fumihiko Hirosawa\footnote{Department of Mathematical Sciences, Faculty of Science, Yamaguchi University, Japan; 
e-mail: hirosawa@yamaguchi-u.ac.jp}
}
\date{}
\maketitle

\begin{abstract}
We consider the conditions for the time dependent potential in which the energy of the Cauchy problem of Klein-Gordon type equation asymptotically behaves like the energy of the wave equation. 
The conclusion of this paper is that the condition is not always given by the order of the potential itself, but should be given by ``generalized zero mean condition'', which is represented by the integral of the potential. 
We also introduce ``generalized modified energy conservation'' in order to describe the appropriate energy for our problem.
\end{abstract}

%
\section{Introduction}
%

Let us consider the following Cauchy problem for Klein-Gordon type equation with time dependent potential:
\begin{equation}\label{KG}
\begin{cases}
\(\pa_t^2 - \Delta + M(t)\) u(t,x) = 0, & (t,x)\in (0,\infty)\times \R^n,
\\ 
u(0,x)=u_0(x),\;\; \pa_t u(0,x)=u_1(x), & x \in \R^n,
\end{cases}
\end{equation}
where $\Delta$ denotes the Laplace operator in $\R^n$ and the potential $M$ is real valued but not necessarily a definite sign. 
It may be natural that $M$ is positive from the point of view of the physical model, but we study it as a mathematical model and remove the restriction. 

It is well known that the energy conservation holds if $M$ is a non-negative constant, and in the case of general $M$, the following property of generalized energy conservation of Klein-Gordon type is proved in \cite{B11, BR12}: 
\begin{equation}\label{GEC}
  q(t)^2 E_{KG}(u;p)(0) \lesssim E_{KG}(u;p)(t) \lesssim E_{KG}(u;p)(0)
\end{equation}
for positive decreasing functions $p$ and $q$ under appropriate conditions to $M$, where 
\begin{equation*}
  E_{KG}(u;p):=
    \|\nabla u(t,\cd)\|_{L^2}^2 + \|\pa_t u(t,\cd)\|_{L^2}^2
   +p(t) \|u(t,\cd)\|_{L^2}^2. 
\end{equation*}
More precisely, if $M=\mu^2(1+t)^{-2\nu}$ with $\mu>0$ and $0 \le \nu \le 1$, then $p$ and $q$ are given by $p=(1+t)^{-\nu_1}$ with $\nu_1<2$ and $q=(1+t)^{-\nu_2}$, respectively, where $\nu_1$ and $\nu_2$ are determined by $\mu$ and $\nu$. 
On the other hand, if $\nu>1$, that is, $\sqrt{|M|} \in L^1([0,\infty))$, then the solution has more wave-like property. 
In \cite{HN18}, the following model is studied as a perturbation problem of \cite{BR12}: 
\begin{equation}\label{M-HN}
  M = \mu^2(1+t)^{-2}+\de(t).
\end{equation}
A conclusion of \cite{HN18} is that the same estimate from above in \eqref{GEC} as in the case $\de = 0$ is valid under some suitable assumptions to $\de(t)$ which permit $\limsup_{t\to\infty} (1+t)^2 \de(t)=\infty$. 
The main purposes of this paper is to determine the conditions for $\de(t)$ of \eqref{M-HN} with $\mu=0$ 
that the \textit{generalized modified energy conservation} of wave type defined later, is established. 
From another point of view, we will determine \textit{generalized zero mean condition} for $M$ 
that \eqref{KG} has wave-like property in spite of $\sqrt{|M|} \not\in L^1([0,\infty))$. 

%
\section{Main theorem}
%
For $b \in C^0([0,\infty))$ satisfying $\lim_{t\to\infty} b(t)=0$ and large $T$, we define the modified energy of the wave type $E(u;b)$ and 
\textit{generalized modified energy conservation} by 
\begin{equation*}
  E(u;b)(t):= 
  \left\|\nabla u(t,\cd) \right\|_{L^2}^2
 +\left\|\pa_t u(t,\cd) - b(t) u(t,\cd)\right\|_{L^2}^2
\end{equation*}
and
\begin{equation}\label{MGEC}
  E(u;b)(t) \simeq E(u;b)(T) \;\; (t \ge T).
\end{equation}

For $M$, we introduce the following properties with parameters $\al$, $\be$ and $\ga$: 
\begin{description}
\item[(M1)]
For $\al \le 1$: 
\begin{equation}\label{stb_0}
  \int^t_0 \left|
    \int^\infty_s \int^\infty_\si M(\tau)\,d\tau \,d\si \right| \,ds
  \lesssim (1+t)^\al
  \;\;(\al \ge 0)
\end{equation}
\begin{equation}\label{stb_inf}
  \int^\infty_t \left|
    \int^\infty_s \int^\infty_\si M(\tau)\,d\tau \,d\si \right| \,ds
  \lesssim (1+t)^\al
  \;\;(\al \le 0). 
\end{equation}
\item[(M2)] 
For $\be < 1$: 
\begin{equation}\label{M0}
  \left|M(t)\right| \lesssim (1+t)^{-2\be}.
\end{equation}
\item[(M3)] 
For $\ga > 0$: 
\begin{equation}\label{intM}
  \left|\int^{\infty}_t M(s)\,ds\right| \lesssim (1+t)^{-\ga},
\end{equation}
\begin{equation}\label{Q2}
  \int^\infty_t \(\int^\infty_s M(\si)\,d\si\)^2 \,ds
  \lesssim (1+t)^{-\ga}
\end{equation}
and
\begin{equation}\label{intQ2}
  \int^\infty_0 \int^\infty_t 
  \(\int^\infty_s M(\si)\,d\si\)^2 \,ds \,dt
  <\infty.
\end{equation}
\end{description}

\begin{remark}\label{ga-ge-be}
\begin{enumerate}
\item[(i)]  
The following estimate is implicitly assumed in \eqref{stb_0}: 
\begin{equation}\label{intQ1}
  \left|\int^\infty_0 \int^\infty_t M(s)\,ds \,dt\right|<\infty.
\end{equation}
\item[(ii)]
If \eqref{Q2} holds for $\ga>1$, then \eqref{stb_0} with $\al=-\ga+2$ is trivial . 
Moreover, if $\ga >2$, then \eqref{stb_inf} with $\al=-\ga+2$ is trivial. 
\end{enumerate}
\end{remark}

\begin{theorem}
Let $u_0 \in H^1$ and $u_1 \in L^2$. 
If {\rm (M1)}, {\rm (M2)} and {\rm (M3)} are valid for 
\begin{equation}\label{al-be}
  \ga \ge 
  \be 
  \begin{cases}
    \ge (\al + 1)/2 &\text{ for } \al \neq 0,\\
    > 1/2 &\text{ for } \al = 0, 
  \end{cases}
\end{equation}
and the following estimate holds:
\begin{equation}\label{eq1-Thm}
  \sup_{t \ge 0}\left\{(1+t)^\al \int^\infty_t \int^\infty_s 
  \(\int^\infty_\si M(\tau)\,d\tau\)^2 \,d\si \,ds\right\} <\infty,
\end{equation}
then there exist $T>0$ and $b \in C^0([0,\infty))$ satisfying 
$b(t) \lesssim (1+t)^{-\ga}$ such that 
\eqref{MGEC} is established. 
Moreover, the following estimate is established for any $t \ge 0$: 
\begin{equation}\label{eq2-Thm}
  E(u;b)(t) \lesssim E_{KG}(u;1)(0). 
\end{equation}
\end{theorem}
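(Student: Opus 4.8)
The plan is to pass to the partial Fourier transform in $x$ and reduce \eqref{KG} to the scalar family of ordinary differential equations
\begin{equation*}
  v''(t) + \(\rho^2 + M(t)\)v(t) = 0, \qquad \rho=|\xi|,\quad v(t)=\widehat{u}(t,\xi),
\end{equation*}
so that by Plancherel $E(u;b)(t)=\int_{\R^n}\(\rho^2|v|^2+|v'-b(t)v|^2\)\,d\xi$. It then suffices to prove two-sided bounds for the microlocal energy $\mathcal{E}(t,\xi):=\rho^2|v|^2+|v'-bv|^2$ that are uniform in $\xi$, and to integrate in $\xi$. For $b$ I would take the first antiderivative-at-infinity of the potential, $b(t)=\int_t^\infty M(s)\,ds$ (so $b'=-M$), whose decay $b(t)\lesssim(1+t)^{-\ga}$ is precisely \eqref{intM}; if needed I would add the Riccati correction so that $b'+b^2+M=0$, the correction being controlled by \eqref{Q2}.

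With $w:=v'-bv$ and this normalization, a direct computation yields the clean identity
\begin{equation*}
  \frac{d}{dt}\mathcal{E} = 2b\(\rho^2|v|^2-|w|^2\),
\end{equation*}
equivalently, with $x=(\rho v,\,w)^{\top}$, the system $x'=\(\rho J+b\,\si_3\)x$ where $J=\(\begin{smallmatrix}0&1\\-1&0\end{smallmatrix}\)$ and $\si_3=\diag(1,-1)$. Introducing the auxiliary quantities $F=\rho^2|v|^2-|w|^2$ and $C=2\,\mathrm{Re}(\rho v\,\overline{w})$ closes the Bloch-type system $\mathcal{E}'=2bF$, $F'=2b\mathcal{E}+2\rho C$, $C'=-2\rho F$, on which all subsequent estimates rest. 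I would then split the extended phase space into a hyperbolic zone and a low (degenerate) zone across the boundary $\rho\sim(1+t)^{-1}$, allowing for an intermediate regime when $\ga<1$.

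In the hyperbolic zone the rotation $\rho J$ dominates, so the $(F,C)$ pair oscillates at frequency $\sim\rho$ and the energy-changing term $2bF$ becomes a genuinely oscillatory integral. Integrating by parts in $t$ trades a factor of $b$ for $b'=-M$ at the cost of $\rho^{-1}$; the surviving remainders are of size $\rho^{-1}b^2\sim\rho^{-1}\(\int_t^\infty M\)^2$, which is integrable in $t$ uniformly on this zone exactly by \eqref{intQ2}, since $\int_0^\infty s\(\int_s^\infty M(\si)\,d\si\)^2\,ds<\infty$ and $\rho^{-1}\lesssim(1+t)$ there, while the boundary terms are absorbed by \eqref{Q2}. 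A Gronwall argument gives $\mathcal{E}(t)\simeq\mathcal{E}(T)$ in this zone. The main obstacle is the low zone, where $\rho J$ is subordinate to $b\,\si_3$ and the $2\times2$ matrix has the \emph{real} eigenvalues $\pm\sqrt{b^2-\rho^2}$, i.e. a genuinely growing mode $\rho v\sim\exp\(\int_0^t b\)$. Since $|b|\lesssim(1+t)^{-\ga}$ with $\ga\le1$ need not be integrable, naive Gronwall fails; the key point is that the exponent is $\int_0^t b=\int_0^t\int_s^\infty M$, which \emph{converges} as $t\to\infty$ by the generalized zero mean condition \eqref{intQ1} implicit in \eqref{stb_0}. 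Hence $\exp\(\int_0^t b\)$ stays bounded above and below, so the ``growing'' mode is in fact bounded; the finer coupling through $\rho C$ and through the $b^2$ correction is then absorbed using \eqref{eq1-Thm} and (M1), and the relation \eqref{al-be} is exactly what makes the low-zone and hyperbolic-zone estimates match at $\rho\sim(1+t)^{-1}$.

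Combining the two (or three) zones yields \eqref{MGEC}. For \eqref{eq2-Thm}, valid for all $t\ge0$, I would handle the compact interval $[0,T]$ by continuity of the flow with bounded coefficients, use \eqref{MGEC} on $[T,\infty)$, and pass from $E(u;b)(0)$ to $E_{KG}(u;1)(0)$ by Young's inequality, since $|v'-b\,v|^2\lesssim|v'|^2+b(0)^2|v|^2\lesssim|v'|^2+|v|^2$. I expect the low-zone estimate to be the genuine difficulty: it is there that the potential is not seen through its size but only through its iterated integrals, and where the interplay of \eqref{intQ1}, \eqref{eq1-Thm} and the constraint \eqref{al-be} must be exploited in full.
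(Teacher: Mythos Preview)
Your skeleton is right—Fourier transform, take the Riccati solution $b$ so that $b'+b^2+M=0$, and split the $(t,\xi)$-plane into a hyperbolic and a degenerate zone—and this is also what the paper does. But two concrete pieces are mis-identified, and both matter.

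\textbf{The zone boundary.} You place the interface at $\rho\sim(1+t)^{-1}$; the correct interface is $\rho\sim(1+t)^{-\al}$, with $\al$ the exponent from (M1). In the hyperbolic zone the paper diagonalises the $2\times2$ system; the off-diagonal remainder has size $|b'|\,|\xi|^{-1}\lesssim(1+t)^{-2\be}|\xi|^{-1}$, and it is exactly the relation $\be\ge(\al+1)/2$ together with the choice $(1+t)^{\al}|\xi|\ge N$ that makes $\displaystyle|\xi|^{-1}\!\int|b'|$ uniformly bounded. With your interface (effectively $\al=1$) this would require $\be\ge1$, which is excluded by (M2). Your integration-by-parts route hits the same wall: the boundary contribution after one integration is $\rho^{-1}|b(t)|\lesssim(1+t)^{1-\ga}$, which is not uniformly small once $\ga<1$; the vague ``intermediate regime'' does not repair this.

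\textbf{The low-frequency zone.} You are right that $\exp\bigl(\int_T^t b\bigr)$ stays bounded, because $\int_T^\infty Q_1$ converges (implicit in (M1)) and $\sum_{k\ge2}Q_k$ is absolutely integrable by \eqref{intQ2}. But after conjugating by $e^{\int b\,\si_3}$ the residual coupling matrix has norm $\sim\rho$, and a naive Gronwall produces $e^{C\rho t}$; with the correct zone $\rho(1+t)^{\al}\le N$ and $\al<1$ one has $\rho\,t_\xi\to\infty$ as $\rho\to0$, so the argument does not close. The paper's device here is essential: change time to $\th=\int_T^t e^{-2\int_T^s b}\,ds$, which turns the problem into a wave equation with speed $a(\th)=e^{2\int_T^{t}b}$; then write $a=\eta\cdot e^{-2\int_t^\infty Q_1}$ with $\eta=\exp\bigl(2\int_T^\infty Q_1+2\int_T^{t}\sum_{k\ge2}Q_k\bigr)$ monotone and bounded. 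Condition (M1) is precisely the stabilisation estimate $\int_T^{t}\!|a-\eta|\,d\th\lesssim(1+t)^{\al}$, so that $|\xi|\int|a-\eta|\le CN$ in $Z_\Psi$, and the $\eta$-part contributes only a fixed factor. Your sketch never isolates this splitting, which is why (M1) and the parameter $\al$ do not appear in a usable way in your low-zone step.

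The treatment on $[0,T]$ and the passage from $E(u;b)$ to $E_{KG}(u;1)(0)$ are as you describe.
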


In \cite{HN18}, $\be \ge (-\ga+3)/2$ is assumed instead of \eqref{al-be} without assuming (M1), that is, only the trivial case $\al = -\ga +2$ in Remark \ref{ga-ge-be} (ii), is considered. 
The following $M$ is an example of the non-trivial case $\al<-\ga+2$. 

\begin{example}
Let $M(t):=\frac{d}{dt}(\sin((1+t)^\ka)(1+t)^{-2\be-\ka+1})$ 
with $\be\le 1/2$ and $\ka > 2(1-\be)$. 
Noting the estimates 
$|M(t)|\lesssim (1+t)^{-2\be}$, 
$\int^\infty_t M(s)\,ds=\sin((1+t)^\ka)(1+t)^{-2\be-\ka+1}$ 
and 
$|\int^\infty_t \int^\infty_s M(\si)\,d\si\,ds| \lesssim (1+t)^{-2\be-2\ka+2}$,  (M2) and (M3) are valid for 
$\ga=2\be+\ka-1$ and $\al=-2\be-2\ka+3 = -\ga-\ka+2$, 
it follows that $\al < -\ga + 2$. 
Moreover, \eqref{al-be} and \eqref{eq1-Thm} are valid by
$\ga > 1 > \be =(\al+1)/2 + 2\be+\ka-2 > (\al+1)/2$
and 
$\al-2\ga+2 = -6\be-4\ka+7 < 2\be-1 \le 0$. 
\end{example}

The conditions (M1)-(M3) seem to be artificial, but they can be actually natural from the viewpoint of previous studies. 
(M1) and (M2) correspond to \textit{stabilization property} and \textit{$C^2$-property} with \textit{very fast oscillation}, respectively, which were introduced in \cite{EFH15, H07} for the energy estimate of the wave equation with time dependent propagation speed. 
\eqref{Q2} and \eqref{intQ1} are corresponding to \textit{generalized zero mean condition}, which was introduced in \cite{HW08}. 
Moreover, \eqref{intQ2} is considered to be related to the classification of scale invariant potential for the Klein-Gordon type equation in \cite{BR12}.

%
\section{Proof of the theorem}
%
The proof of the theorem is based on the methods introduced in \cite{EKNR14,HN18,H19} that the Klein-Gordon type equation is reduced to a dissipative wave equation or a wave equation with time dependent propagation speed. 
Then, solutions of the equations are estimated in a particular zones of time-frequency space by the method introduced in \cite{H07, HW08} after the Fourier transformation with respect to spatial variables. 

\subsection{Reduction to a dissipative wave equation}
For $t \ge T$ with a large $T$, we reduce the Klein-Gordon type equation of \eqref{KG} to the dissipative wave equation 
$(\pa_t^2 - \Delta + 2b(t)\pa_t ) w = 0$ 
by the transformation 
\begin{equation*}
  w(t,x):=\exp\(\int^\infty_t b(s)\,ds\)u(t,x),
\end{equation*}
where $b$ is a solution of the following Riccati equation
\begin{equation}\label{b-M}
  b'(t)+b(t)^2+M(t)=0. 
\end{equation}

Let us derive the representation of a particular solution of \eqref{b-M}. 
We define $\{q_k(t)\}_{k=1}^\infty$ and $\{Q_k(t)\}_{k=1}^\infty$ on $[T,\infty)$ by 
\begin{equation*}\label{qk}
  q_1(t):=M(t),\;\;
  q_k(t):=\sum_{j=1}^{k-1} Q_j(t)Q_{k-j}(t) \;\; (k=2,3,\ldots)
\end{equation*}
and
\begin{equation*}\label{Qk}
  Q_k(t):=-\int^\infty_t q_k(s)\,ds
  \;\;(k=1,2,\ldots).
\end{equation*}
%
\begin{lemma}\label{sol_b-M}
A particular solution of \eqref{b-M} is represented by 
$b(t):=\sum_{k=1}^\infty Q_k(t)$. 
\end{lemma}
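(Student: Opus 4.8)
The plan is to recognise $b=\sum_{k\ge1}Q_k$ as the limit of the Picard (Neumann) iteration for the integral form of \eqref{b-M}, so that the proof splits into an \emph{algebraic} verification of the equation and an \emph{analytic} proof of convergence, the latter being the real work. First I would integrate \eqref{b-M} from $t$ to $\infty$ and impose the decay $b(\infty)=0$ to obtain the equivalent fixed-point equation
\begin{equation*}
  b(t) = -\int_t^\infty\(M(s)+b(s)^2\)\,ds,
\end{equation*}
whose successive corrections are exactly the $Q_k$: dropping the quadratic term gives $Q_1=-\int_t^\infty M$, and substituting the partial sum back into $b^2$ and collecting homogeneous orders reproduces $q_k=\sum_{j=1}^{k-1}Q_jQ_{k-j}$ and $Q_k=-\int_t^\infty q_k$. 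Granting termwise differentiability, the verification is then purely formal: by the fundamental theorem of calculus $Q_k'=q_k$, so $b'=\sum_{k\ge1}q_k$; the Cauchy product gives $b^2=\sum_{k\ge2}\sum_{j=1}^{k-1}Q_jQ_{k-j}=\sum_{k\ge2}q_k$; and since $q_1=M$ these identities combine to yield the Riccati relation \eqref{b-M}.

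The substance is to show that $\sum_{k\ge1}Q_k$ converges, uniformly and absolutely, on some $[T,\infty)$ with $T$ large. Writing $g(t):=\sum_{k\ge1}|Q_k(t)|$, the triangle inequality applied to the recursion gives the scalar integral inequality $g(t)\le |Q_1(t)|+\int_t^\infty g(s)^2\,ds$, so I would compare $g$ with the super-solution $G$ of $G(t)=|Q_1(t)|+\int_t^\infty G(s)^2\,ds$ and prove an a priori bound $G(t)\lesssim(1+t)^{-\ga}$ on $[T,\infty)$ by a continuity/bootstrap argument. Equivalently, one can run the estimate term by term: if a nonincreasing majorant $g_0$ can be chosen with $\int_t^\infty g_0^2\le\theta\,g_0$ and $|Q_1|\le g_0$ for $t\ge T$, then induction through $q_k=\sum_j Q_jQ_{k-j}$, $|Q_k|\le\int_t^\infty|q_k|$ propagates $|Q_k(t)|\le C_{k-1}\theta^{k-1}g_0(t)$ with the Catalan recursion $C_{k-1}=\sum_{m=0}^{k-2}C_mC_{k-2-m}$; summing and using that $\sum_n C_n\theta^n=(1-\sqrt{1-4\theta})/(2\theta)$ converges for $\theta<1/4$ gives $\sum_k|Q_k(t)|\le g_0(t)\sum_n C_n\theta^n<\infty$. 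The analogous bound on $\sum_k|q_k|\lesssim g_0^2$ controls $\sum_kQ_k'$ and legitimises the termwise differentiation used above, so that $b\in C^1([T,\infty))$ and $b(t)\to0$, whence $b(t)\lesssim(1+t)^{-\ga}$.

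The majorant is produced from {\rm (M3)}: \eqref{intM} gives $|Q_1(t)|=\left|\int_t^\infty M\right|\lesssim(1+t)^{-\ga}$, and \eqref{Q2} gives precisely $\int_t^\infty\(\int_s^\infty M\)^2\,ds\lesssim(1+t)^{-\ga}$, i.e.\ the quadratic feedback $\int_t^\infty Q_1^2$ again decays like $(1+t)^{-\ga}$. The main obstacle I expect is securing the contraction constant $\theta<1/4$ on $[T,\infty)$: for $\ga>1$ a pure power $g_0(t)=c(1+t)^{-\ga}$ works, since $\int_t^\infty g_0^2\lesssim(1+t)^{-2\ga+1}=(1+t)^{-\ga+1}\cdot(1+t)^{-\ga}$ and the prefactor $(1+T)^{-\ga+1}\to0$; but in the admissible range $\ga\le1$ allowed by \eqref{al-be} the naive pointwise bound $|Q_1|\lesssim(1+t)^{-\ga}$ no longer controls $\int_t^\infty Q_1^2$ with a gain, and one must invoke \eqref{Q2} (and, for global integrability, \eqref{intQ2}) directly, designing the weighted norm so that the quadratic Picard map is genuinely a contraction for large $T$. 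Once that smallness is in place, convergence, termwise differentiation, and the decay of $b$ all follow, completing the proof.
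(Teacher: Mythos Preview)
Your formal verification---$Q_k'=q_k$, the Cauchy product $b^2=\sum_{k\ge2}q_k$, and $q_1=M$ combining to give \eqref{b-M}---is exactly the ``straightforward calculation'' the paper has in mind for this lemma; the paper says nothing more here and explicitly defers convergence to the two lemmas that follow.

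Where you go further is the convergence argument, and there your route differs from the paper's. You propose a Catalan-number majorant: a nonincreasing $g_0$ with $|Q_1|\le g_0$ and $\int_t^\infty g_0^2\le\theta g_0$, $\theta<1/4$, propagating $|Q_k|\le C_{k-1}\theta^{k-1}g_0$. The paper (Lemma~\ref{Lem-QkQ2}) instead proves $|Q_k|\le 4^{k-1}(-Q_2)\,\phi^{(k-2)/2}$ with $\phi(t)=-\int_t^\infty Q_2(s)\,ds$, so that the smallness parameter is $\phi(T)$, which \eqref{intQ2} drives to zero as $T\to\infty$ for every admissible $\ga$.

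This difference is exactly what resolves the obstacle you flag but leave open. Your scheme needs the \emph{majorant} $g_0$ itself to satisfy $\int_t^\infty g_0^2\le\theta g_0$, and for $\ga\le1$ no power $c(1+t)^{-\ga}$ does; the hypotheses \eqref{intM}--\eqref{Q2} control $\int_t^\infty Q_1^2$, not $\int_t^\infty g_0^2$ for a larger envelope. The paper never majorises $Q_1$ pointwise: each occurrence of $Q_1$ in the induction is absorbed by Cauchy--Schwarz, turning $\int_t^\infty |Q_1 Q_l|$ into $(-Q_2)^{1/2}\bigl(\int_t^\infty Q_l^2\bigr)^{1/2}$, and the powers of $\phi$ are then generated by integration by parts via $\phi'=Q_2$ and $Q_2'=q_2\ge0$. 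So the ``weighted norm'' you gesture at is, concretely, the pair $(-Q_2,\phi)$ rather than a single scalar majorant---this is the missing ingredient that closes the low-$\ga$ case.
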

\begin{proof}
The proof is straightforward calculation. 
\end{proof}

The following lemmas ensure the convergence of $b(t)$ on $[T,\infty)$ for large $T$. 
\begin{lemma}\label{Lem-QkQ2}
$Q_2(t) \le 0$ and the following estimate is established for any $k\ge 2:$ 
\begin{equation}\label{Lem-eq-QkQ2}
  |Q_k(t)| \le 4^{k-1}(-Q_2(t)) \phi(t)^{\frac{k-2}{2}},
  \;\;
  \phi(t):=-\int^\infty_t Q_2(s)\,ds.
\end{equation}
\end{lemma}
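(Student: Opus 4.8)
The plan is to run an induction on $k$, after first nailing down the sign and an $L^2$-type identity that will power every estimate. From the definitions, $q_2 = Q_1^2 \ge 0$, so $Q_2(t) = -\int_t^\infty Q_1(s)^2\,ds \le 0$; this proves $Q_2 \le 0$ and simultaneously settles the base case $k=2$, since $|Q_2| = -Q_2 \le 4(-Q_2)$. I would then record the three structural facts driving the induction: writing $\psi := -Q_2 \ge 0$, the function $\psi$ is nonincreasing (because $Q_2' = q_2 = Q_1^2 \ge 0$); the defining relation $\phi(t) = \int_t^\infty \psi(s)\,ds$ gives $\phi' = -\psi$; and one has the identity $\int_t^\infty Q_1(s)^2\,ds = \psi(t)$ together with $\phi(s)\to 0$ as $s\to\infty$, the latter being exactly the content of \eqref{intQ2}, which says $\int_0^\infty \psi < \infty$.

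For the inductive step, assuming the bound for all indices in $\{2,\dots,k-1\}$, I would split $q_k = 2\,Q_1 Q_{k-1} + \sum_{j=2}^{k-2} Q_j Q_{k-j}$ into \emph{interior} terms (both indices $\ge 2$) and the \emph{boundary} terms containing $Q_1$. For the interior terms the inductive hypothesis controls both factors, yielding $|Q_j Q_{k-j}| \le 4^{k-2}\psi(s)^2\phi(s)^{(k-4)/2}$; using $\psi(s)^2 \le \psi(t)\psi(s) = -\psi(t)\phi'(s)$ and the elementary integration $\int_t^\infty -\phi'\phi^{(k-4)/2}\,ds = \frac{2}{k-2}\phi(t)^{(k-2)/2}$ (the primitive vanishing at infinity) produces a clean bound of the form $C 4^{k-2}\psi(t)\phi(t)^{(k-2)/2}$, summed over the $k-3$ interior indices.

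The main obstacle is the boundary term $2\,Q_1 Q_{k-1}$: the factor $Q_1$ is \emph{not} controlled by the inductive estimate, so it cannot be handled like an interior factor. My resolution is to leave $Q_1$ under the integral and exploit the $L^2$-identity $\int_t^\infty Q_1^2 = \psi(t)$. Applying the inductive hypothesis only to $Q_{k-1}$ and then Cauchy--Schwarz gives $\int_t^\infty |Q_1|\,\psi\,\phi^{(k-3)/2}\,ds \le \bigl(\int_t^\infty Q_1^2\bigr)^{1/2}\bigl(\int_t^\infty \psi^2\phi^{k-3}\bigr)^{1/2}$, and each factor is again evaluated through the monotonicity of $\psi$ and the $\phi'=-\psi$ integration, producing a bound of the same shape $\psi(t)\phi(t)^{(k-2)/2}$ as the interior contribution.

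Finally I would add the boundary and interior contributions and verify that the accumulated constant does not exceed $4^{k-1} = 4\cdot 4^{k-2}$; concretely the requirement reduces to the inequality $\frac{2}{\sqrt{k-2}} + \frac{2(k-3)}{k-2} \le 4$, which holds for every $k\ge 3$ (the left-hand side never exceeds about $2.5$), closing the induction. The only analytic points needing care are the convergence of all the improper integrals and the vanishing of $\phi^{(k-2)/2}$ at infinity, both of which are guaranteed by \eqref{intQ2}; the degenerate case $\psi(t_0)=0$ forces $Q_1\equiv 0$ on $[t_0,\infty)$ by monotonicity, so the estimate there is trivial.
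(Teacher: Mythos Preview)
Your proof is correct and follows essentially the same route as the paper's: induction on $k$, splitting $q_{l+1}$ into the boundary pair $Q_1Q_l$ (handled by Cauchy--Schwarz using $\int_t^\infty Q_1^2=-Q_2(t)$) and the interior pairs $Q_jQ_{l+1-j}$ (handled by the inductive hypothesis), then integrating via $\phi'=Q_2$ and the monotonicity of $Q_2$. The only cosmetic differences are your notation $\psi=-Q_2$ and your use of the pointwise bound $\psi(s)\le\psi(t)$ in place of the paper's equivalent integration-by-parts step; the constant bookkeeping is the same and closes with the same margin.
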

\begin{proof}
$Q_2(t) \le 0$ is trvial from the definition, 
and $\lim_{t\to\infty}Q_2(t)=0$ by \eqref{Q2}. 
\eqref{Lem-eq-QkQ2} is trivial for $k=2$. 
If \eqref{Lem-eq-QkQ2} is valid for $k=3,\ldots,l$, 
then by Cauchy-Schwarz inequality, integration by parts, 
noting $\phi'(t)=Q_2(t)$ and $\frac{d}{dt} Q_2(t)=q_2(t) \ge 0$, we have
\begin{align*}
  \left|\int_t^\infty Q_1(s) Q_l(s) \,ds \right|
\le & \ 4^{l-1} \(\int_t^\infty Q_1(s)^2 \,ds \)^{\frac12}
  \(\int_t^\infty
         Q_2(s)^2 \phi(s)^{l-2} \,ds \)^{\frac12}
\\
=& \ \frac{4^{l-1}}{(l-1)^{\frac12}} (-Q_2(t))^{\frac12}
  \(\int_t^\infty Q_2(s) \frac{d}{ds}\phi(s)^{l-1}
   \,ds \)^{\frac12}
\\
\le & \ 4^{l-1} (-Q_2(t)) \phi(t)^{\frac{l-1}{2}}. 
\end{align*}
Moreover, for $2 \le j \le l$ we have 
\begin{align*}
  \left| \int_t^\infty Q_j(s)Q_{l+1-j}(s) \,ds \right|
  \le & \
   4^{l-1} 
   \int_t^\infty Q_2(s)^2 \phi(s)^{\frac{l-3}{2}} \,ds 
\\
  = & \
   \frac{4^{l}}{2(l-1)}
   \int_t^\infty Q_2(s)\frac{d}{ds}\phi(s)^{\frac{l-1}{2}} \,ds 
  \le
  \frac{4^{l}(-Q_2(t))\phi(t)^{\frac{l-1}{2}} }{2(l-1)} .
\end{align*}
Therefore, we obtain
\begin{align*}
  |Q_{l+1}(t)|
  \le & \
    2\left| \int_t^\infty Q_1(\si)Q_{l}(\si) \,d\si\right|
 +\sum_{j=2}^{l-1}
  \left| \int_t^\infty Q_j(\si)Q_{l+1-j}(\si) \,d\si\right|
\\
  = & \
  4^{l}(-Q_2(t))\phi(t)^{\frac{l-1}{2}}
  \(
    \frac{1}{2} + \sum_{j=2}^{l-1} \frac{1}{2(l-1)} 
  \)
  \le
  4^l (-Q_2(t))\phi(t)^{\frac{l-1}{2}},
\end{align*}
it follows that \eqref{Lem-eq-QkQ2} is also valid for any $k \le l+1$. 
Thus \eqref{Lem-eq-QkQ2} is valid for any $k \ge 2$. 
\end{proof}

\begin{lemma}\label{Lemma_estb}
There exist positive constants $T$, $b_0=b_0(T)$, $b_1=b_1(T)$ and $b_2=b_2(T)$ such that the following estimates are established for any $t \ge T$: 
\begin{equation}\label{est_sumQk}
  \sum_{k=2}^\infty \left|Q_k(t)\right| \le \frac32 |Q_2(t)|,
\end{equation}
\begin{equation}\label{int_b}
  \left|\int_t^\infty b(s)\,ds\right| \le b_0,
\end{equation}
\begin{equation}\label{est|b|}
  |b(t)| \le b_1 (1+t)^{-\ga}
\end{equation}
and
\begin{equation}\label{estb'}
  |b'(t)| \le b_2 (1+t)^{-2\be}.
\end{equation}
\end{lemma}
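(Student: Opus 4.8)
The plan is to establish the four estimates in the order \eqref{est_sumQk}, \eqref{int_b}, \eqref{est|b|}, \eqref{estb'}, since each later bound draws on the earlier ones; the crux is \eqref{est_sumQk}, which simultaneously guarantees convergence of the series defining $b$. First I would record two consequences of (M3). By \eqref{Q2} one has $|Q_2(t)|=\int_t^\infty(\int_s^\infty M(\si)\,d\si)^2\,ds \lesssim (1+t)^{-\ga}$, and by \eqref{intQ2} the quantity $\phi(t)=\int_t^\infty|Q_2(s)|\,ds$ is finite with $\phi(t)\to 0$ as $t\to\infty$; moreover $\phi$ is non-increasing since $\phi'=Q_2\le 0$.

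For \eqref{est_sumQk} I would separate the $k=2$ term and apply Lemma \ref{Lem-QkQ2} to the tail:
\begin{equation*}
  \sum_{k=2}^\infty|Q_k(t)|
  = |Q_2(t)| + \sum_{k=3}^\infty|Q_k(t)|
  \le |Q_2(t)|\(1 + \sum_{k=3}^\infty 4^{k-1}\phi(t)^{\frac{k-2}{2}}\).
\end{equation*}
The tail is a geometric series in $4\sqrt{\phi(t)}$, which sums to at most $\tfrac12$ once $\phi(t)$ is small enough (e.g.\ $\phi(t)\le 1/1296$). Choosing $T$ so large that $\phi(T)$ lies below this threshold—possible because $\phi\to0$—and using that $\phi$ is non-increasing, \eqref{est_sumQk} holds for all $t\ge T$. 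In particular $\sum_{k\ge2}|Q_k(t)|<\infty$, so by Lemma \ref{sol_b-M} the series $b=\sum_k Q_k$ is well defined and solves \eqref{b-M} on $[T,\infty)$.

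Next, for \eqref{int_b} I would split $\int_t^\infty b = \int_t^\infty Q_1 + \int_t^\infty\sum_{k\ge2}Q_k$. The second piece is controlled by \eqref{est_sumQk}: its modulus is at most $\tfrac32\int_t^\infty|Q_2(s)|\,ds=\tfrac32\phi(t)\le\tfrac32\phi(T)$. For the first, $\int_t^\infty Q_1(s)\,ds=-\int_t^\infty\int_s^\infty M(\si)\,d\si\,ds$, and \eqref{intQ1} asserts that $\int_0^\infty\int_s^\infty M$ converges, so this tail tends to $0$ and is bounded on $[T,\infty)$; adding the two yields \eqref{int_b}. Then \eqref{est|b|} follows from $|b(t)|\le|Q_1(t)|+\tfrac32|Q_2(t)|$ together with $|Q_1(t)|=|\int_t^\infty M|\lesssim(1+t)^{-\ga}$ by \eqref{intM} and the bound on $|Q_2(t)|$ above. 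Finally, \eqref{estb'} is immediate from \eqref{b-M}, which gives $b'=-b^2-M$, so $|b'(t)|\le|b(t)|^2+|M(t)|\lesssim(1+t)^{-2\ga}+(1+t)^{-2\be}\lesssim(1+t)^{-2\be}$, the last step using $\ga\ge\be$ from \eqref{al-be} and the pointwise bound \eqref{M0}.

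The main obstacle is \eqref{est_sumQk}: everything hinges on summing the series with geometric ratio $4\sqrt{\phi(t)}$ strictly below $1$, which forces the choice of a large $T$ and relies essentially on $\phi(t)\to0$—that is, on the integrability hypothesis \eqref{intQ2}—and on the recursive estimate of Lemma \ref{Lem-QkQ2}. Once the series is tamed and $b$ is shown to satisfy the Riccati equation, the remaining three estimates are short deductions.
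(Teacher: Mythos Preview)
Your proof is correct and follows essentially the same approach as the paper: you choose $T$ so that $\phi(T)\le 1/1296=1/6^4$, sum the geometric tail from Lemma~\ref{Lem-QkQ2} to obtain \eqref{est_sumQk}, and then deduce \eqref{int_b}, \eqref{est|b|}, \eqref{estb'} in the same order using the same ingredients (\eqref{intQ1} and \eqref{intQ2} for the integral of $b$, \eqref{intM} and \eqref{Q2} for $|b|$, and the Riccati equation with \eqref{M0} and $\ga\ge\be$ for $|b'|$). The only cosmetic difference is that the paper also imposes $|\int_t^\infty Q_1|\le 1$ as part of the choice of $T$, whereas you invoke boundedness of this tail a posteriori from \eqref{intQ1}.
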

\begin{proof}
By \eqref{intM}, \eqref{intQ2} and \eqref{intQ1} there exists $T>0$ such that 
\begin{equation}\label{set_T}
  \left|\int^\infty _t Q_1(s)\,ds \right| \le 1
  \;\text{ and }\;
  \phi(t) \le \frac{1}{6^4}
\end{equation}
for any $t \ge T$. 
Then, by Lemma \ref{Lem-QkQ2}, we have 
\begin{align*}
  \left|\sum_{k=3}^\infty Q_k(t) \right|
  \le 
  |Q_2(t)| \sum_{k=3}^\infty 
  4^{k-1} \phi(t)^{\frac{k-2}{2}}
  \le
  \frac12 |Q_2(t)|, 
\end{align*}
which gives \eqref{est_sumQk}. 
By \eqref{intQ2}, \eqref{intQ1} and \eqref{est_sumQk}, we have 
\begin{equation}\label{est_int|b|}
  \left|\int_t^\infty b(s)\,ds\right|
  \le 
  \left|\int^\infty_t Q_1(s)\,ds\right|
 +\sum_{k=2}^\infty  \int^\infty_t \left| Q_k(s)\right|\,ds
  \le b_0. 
\end{equation}
By \eqref{intM}, \eqref{Q2} and \eqref{est_sumQk}, we have 
\begin{align*}
  |b(t)|
  \le
  |Q_1(t)| + \frac32|Q_2(t)| \le b_1(1+t)^{-\ga}. 
\end{align*}
By \eqref{M0}, \eqref{al-be}, \eqref{b-M} and \eqref{est|b|}, we have 
\begin{equation*}
  |b'(t)| 
  \le 
  |M(t)| + b_1^2(1+t)^{-2\ga}
  \le b_2(1+t)^{-2\be}. 
\end{equation*}
Thus the proof is concluded. 
\end{proof}

Lemma \ref{Lemma_estb} ensures that the solution of \eqref{KG} is represented by the solution of the following dissipative wave equation: 
\begin{equation}\label{DW}
  (\pa_t^2 - \Delta + 2b(t)\pa_t) w(t,x) = 0
\end{equation}
for $t \ge T$. 
By carrying out partial Fourier transformation with respect to spatial variables and denoting the Fourier image of $w(t,x)$ as $\hw(t,\xi)$, 
\eqref{DW} is represented as follows: 
\begin{equation}\label{FDW}
(\pa_t^2 + |\xi|^2 + 2b(t) \pa_t) \hw(t,\xi) = 0. 
\end{equation}
Moreover, \eqref{FDW} is represented by the following first order system: 
\begin{equation}\label{W}
  \pa_t W = A W, 
  \;\;
  A:= \begin{pmatrix} -2b(t) & i|\xi| \\ i|\xi| & 0 \end{pmatrix},
  \;\;
  W:=\begin{pmatrix} \pa_t w \\ i|\xi| w \end{pmatrix}.
\end{equation}
We estimate the solution of \eqref{W} in different ways in the following two zones of the time-frequency space $[T,\infty)\times \R^n$: 
\begin{equation*}
\begin{cases}
  Z_H:=\left\{(t,\xi) \in [T,\infty)\times \R^n\;;\; 
  (1+t)^\al |\xi| \ge N \right\}, \\
  Z_\Psi:=\left\{(t,\xi)\in [T,\infty)\times \R^n \;;\; 
  (1+t)^\al |\xi| \le N \right\},
\end{cases}  
\end{equation*}
where $N$ is a positive constant which will be chosen later. 
Denoting 
\[
  t_\xi:=\max\left\{T,(N|\xi|^{-1})^{\frac{1}{\al}}-1\right\}
\]
for $\al\neq 0$ and $|\xi|>0$, 
we see that 
$Z_H=\{ t \ge t_\xi\}$ and $Z_\Psi=\{ T \le t \le t_\xi\}$ for 
$\al > 0$, and that 
$Z_H=\{ T \le t \le t_\xi\}$ and $Z_\Psi=\{ t \ge t_\xi\}$ for $\al < 0$. 

\subsection{Estimate in $Z_H$}

\begin{proposition}\label{prop_estZH}
There exist positive constants $N$ and $K_1$ such that the following estimates are established in $Z_H$: 
\begin{equation*}
  \begin{cases}
    K_1^{-1} |W(t_\xi,\xi)| \le |W(t,\xi)| \le K_1|W(t_\xi,\xi)| 
      & (\al>0), \\
    K_1^{-1} |W(T,\xi)|\le |W(t,\xi)| \le K_1|W(T,\xi)| 
      & (\al \le 0).
  \end{cases}
\end{equation*}
\end{proposition}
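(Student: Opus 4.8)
The plan is to reduce the first-order system \eqref{W} to a purely oscillatory normal form in which the dissipation enters only through a bounded exponential factor and a rapidly oscillating off-diagonal coupling. First I would diagonalize the principal part: writing $A=A_0+R$ with $A_0=\begin{pmatrix}0&\i|\xi|\\\i|\xi|&0\end{pmatrix}$ and $R=\diag(-2b(t),0)$, the eigenvalues of $A_0$ are $\pm\i|\xi|$, so with the constant invertible matrix $\cM=\begin{pmatrix}1&1\\1&-1\end{pmatrix}$ the substitution $W=\cM V$ gives
\begin{equation*}
  \pa_t V=\begin{pmatrix}\i|\xi|-b&-b\\-b&-\i|\xi|-b\end{pmatrix}V.
\end{equation*}
Since $\cM$ is a fixed matrix, $|W|\simeq|V|$ with absolute constants, so it suffices to prove the two-sided bound for $V$.

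Next I would strip off the diagonal part by setting $V_1=\exp(\i|\xi|t-\int_{\tx}^t b)\,U_1$ and $V_2=\exp(-\i|\xi|t-\int_{\tx}^t b)\,U_2$ (replacing $\tx$ by $T$ when $\al\le0$). By \eqref{int_b} the factor $\exp(-\int_{\tx}^t b)$ has modulus in $[\e^{-2b_0},\e^{2b_0}]$ and $\e^{\pm\i|\xi|t}$ has modulus one, hence $|V|\simeq|U|$ uniformly, and $U$ solves the remainder system
\begin{equation*}
  \pa_t U=-b(t)\begin{pmatrix}0&\e^{-2\i|\xi|t}\\\e^{2\i|\xi|t}&0\end{pmatrix}U,
  \qquad
  \frac{d}{dt}|U|^2=-4b(t)\,\mathrm{Re}\bigl(\e^{-2\i|\xi|t}\ol{U_1}U_2\bigr).
\end{equation*}
The entire point of passing to $U$ is that the right-hand side now carries the oscillatory factor $\e^{-2\i|\xi|t}$, which can be integrated by parts to produce the gain $|\xi|^{-1}$.

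The core step is to integrate this identity from the reference time ($\tx$ for $\al>0$, $T$ for $\al\le0$) to $t$, and to integrate by parts in $\e^{-2\i|\xi|t}$. The boundary terms are of size $|\xi|^{-1}|b|\,|U|^2$, and in $Z_H$ the defining inequality $|\xi|\ge N(1+t)^{-\al}$ with \eqref{est|b|} bounds $|\xi|^{-1}|b(t)|$ by $\frac{b_1}{N}(1+\tx)^{\al-\ga}\le\frac{b_1}{N}$, using $\ga\ge(\al+1)/2\ge\al$ for $\al\le1$. The integral term produced by the integration by parts contains $b'$ and $b^2$; here I would use $|U_j'|\le|b|\,|U|$ from the system together with \eqref{est|b|}, \eqref{estb'} and $\ga\ge\be$, which reduce it to a constant multiple of
\begin{equation*}
  \frac{1}{|\xi|}\int \bigl(|b'(s)|+|b(s)|^2\bigr)|U(s)|^2\,ds
  \lesssim \frac{1}{|\xi|}\int (1+s)^{-2\be}\,ds\;\sup|U|^2 .
\end{equation*}
Evaluating the time integral and inserting $|\xi|^{-1}\simeq N^{-1}(1+\tx)^\al$ (and the analogous bound for $\al\le0$) yields the factor $N^{-1}(1+\tx)^{\al+1-2\be}$, which is $\le N^{-1}$ precisely because of \eqref{al-be}, namely $\be\ge(\al+1)/2$ (and $\be>1/2$ when $\al=0$, where the interval of integration is $[T,\infty)$).

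Thus, for $N$ large enough (and $T$ large enough that $b_0$ is small), all correction terms are bounded by $\frac{C}{N}\sup|U|^2$, and a standard bootstrap on finite subintervals — taking the supremum of $|U|^2$, absorbing the $\frac{C}{N}\sup|U|^2$ term, then exhausting $Z_H$ — gives $\frac12|U|^2\le|U(t)|^2\le 2|U|^2$ measured at the reference time. Undoing the two reductions $|W|\simeq|V|\simeq|U|$ then produces the claimed constant $K_1$. I expect the main obstacle to be the integral remainder from the integration by parts: it is exactly here that the oscillation gain $|\xi|^{-1}$ must be balanced against the growth of $\int(1+s)^{-2\be}\,ds$, and this balance is what forces \eqref{al-be}; the accompanying technical point is rendering the $\sup|U|^2$ on the right harmless, which is achieved by the bootstrap together with the smallness of both $N^{-1}$ and $b_0$.
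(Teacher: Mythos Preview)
Your argument is correct, but the route is not the one the paper takes. The paper diagonalizes the \emph{full} matrix $A$ by a time-dependent diagonalizer $M(t)$ built from the exact eigenvalues $\la=-b-\i\sqrt{|\xi|^2-b^2}$ (which are non-real in $Z_H$ once $N\ge 2b_1$); the commutator $M^{-1}\pa_tM$ then already carries the factor $|b'(t)||\xi|^{-1}$, so a direct Gronwall argument using $|\xi|^{-1}\!\int|b'(s)|\,ds\lesssim N^{-1}(1+\tx)^{\al+1-2\be}$ and \eqref{int_b} finishes the proof. You instead diagonalize only the constant principal part $A_0$, leaving an off-diagonal coupling of size $|b(t)|$ modulated by $\e^{\pm2\i|\xi|t}$, and recover the missing factor $|\xi|^{-1}$ through one integration by parts followed by a sup--absorption bootstrap. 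Both routes invoke \eqref{int_b}, \eqref{est|b|}, \eqref{estb'} at the same places and reduce to the same balance $N^{-1}(1+\tx)^{\al+1-2\be}\le N^{-1}$ coming from \eqref{al-be}; the paper's version is a little shorter because the $|\xi|^{-1}$ gain is produced by the diagonalization step itself, while your version has the minor advantage that all transformations before the final integration are constant and no smallness of $b/|\xi|$ is needed to set up the normal form. One cosmetic point: your parenthetical about choosing $T$ large so that $b_0$ is small is unnecessary --- $T$ is already fixed by Lemma~\ref{Lemma_estb}, and the factors $\e^{\pm2b_0}$ simply get absorbed into $K_1$; only $N$ needs to be taken large.
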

\begin{proof}
Let $(t,\xi) \in Z_H$. 
Setting $N \ge 2b_1$, by \eqref{al-be} and \eqref{est|b|} we have 
$|b(t)| \le b_1(1+t)^{-\ga} \le b_1 (1+t)^{-\al} 
  \le b_1 N^{-1}|\xi| \le |\xi|/2$. 
Since the eigenvalues and the respective eigenvectors of $A$ are given by 
$\{\la,\ol{\la}\}$ and 
$\{{}^t(1,i\de),{}^t(-i\de,1)\}$, where 
$\la = -b(t) - i\sqrt{|\xi|^2-b^2}$ and $\de = \la|\xi|^{-1}$,
and noting the inequalities 
\begin{equation}\label{est_detM}
  \sqrt{3} \le |1-\de^2| = 2\sqrt{1-b^2|\xi|^{-2}} \le 2,
\end{equation}
$A$ is diagonalized as $M^{-1}A M = \diag(\la,\ol{\la})=:\La$ by the diagonalizer 
\begin{equation*}
  M:=\begin{pmatrix} 1 & -i\de \\ i \de & 1 \end{pmatrix}. 
\end{equation*}
Denoting $W_1:=M^{-1}W$, \eqref{W} is rewritten as follows: 
\begin{equation*}
  \pa_t W_1 
  = \(\La - M^{-1} (\pa_t M) \)W_1
  = \(\La_1 + R_1\)W_1, 
\end{equation*}
where
\begin{equation*}
  \La_1 
  :=\(-b - \frac{\pa_t \log\(1-\de^2\)}{2} \)I 
   - i \sqrt{|\xi|^2-b^2} \begin{pmatrix} 1 & 0 \\ 0 & -1 \end{pmatrix}
\end{equation*}
and
\begin{equation*}
  R_1:= -i\frac{b'|\xi|^{-1}}{2\(1-b^2|\xi|^{-2}\)} 
  \begin{pmatrix} 0 & -1 \\ 1 & 0 \end{pmatrix}.
\end{equation*}
Then, by \eqref{est_detM} we have 
\begin{align*}
  \pa_t |W_1|^2
  \lesseqgtr
  -\(2b+\Re\(\pa_t \log\(1-\de^2\)\) \pm \frac{4}{3}|b'||\xi|^{-1} \) |W_1|^2. 
\end{align*}
Noting that \eqref{al-be} and \eqref{estb'} conclude the following estimates: 
\begin{equation*}
|\xi|^{-1} \int^\infty_{t_\xi} |b'(s)|\,ds<\infty \;(\al \ge 0)
\;\text{ and }\;
|\xi|^{-1} \int^{t_\xi}_{T} |b'(s)|\,ds<\infty \; (\al < 0),
\end{equation*}
by Lemma \ref{Lemma_estb}, \eqref{est_detM} and Gronwall's inequality, we have 
$|W_1(t,\xi)| \simeq |W_1(t_\xi,\xi)|$ for $\al>0$ 
and 
$|W_1(t,\xi)| \simeq |W_1(T,\xi)|$ for $\al\le 0$ in $Z_H$. 
Finally, noting that $|\de|^2 = 1$ and \eqref{est_detM} gives 
$\sqrt{1/2}|W| \le |W_1| \le \sqrt{2/3}|W|$, 
we conclude the proof. 
\end{proof}

\subsection{Estimate in $Z_\Psi$}

\begin{proposition}\label{prop_estZPsi}
There exist a positive constant $K_2$ such that the following estimates are established in $Z_\Psi$: 
\begin{equation*}
  \begin{cases}
    K_2^{-1}|W(T,\xi)| \le |W(t,\xi)| \le K_2|W(T,\xi)| 
      & (\al>0), \\
    K_2^{-1}|W(t_\xi,\xi)| \le |W(t,\xi)| \le K_2|W(t_\xi,\xi)| 
      & (\al \le 0).
  \end{cases}
\end{equation*}
\end{proposition}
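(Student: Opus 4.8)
The plan is to estimate the system \eqref{W} directly in $Z_\Psi$, where the zone constraint $(1+t)^\al|\xi| \le N$ makes $|\xi|$ the small parameter and the dissipation coefficient $b$ dominates; in contrast with the proof of Proposition \ref{prop_estZH}, diagonalising the leading symbol is therefore not the right move. Instead I would first remove the diagonal dissipation exactly. Writing $\tB(t):=\int_t^\infty b(s)\,ds$, the bound \eqref{int_b} gives $|\int_T^t b| = |\tB(T)-\tB(t)| \le 2b_0$, so the gauge transform $\tW := \mathrm{diag}(\exp(2\int_T^t b),1)\,W$ satisfies $|\tW| \simeq |W|$ uniformly and converts \eqref{W} into $\pa_t \tW = \i|\xi|\left(\begin{smallmatrix} 0 & e^{2\int_T^t b} \\ e^{-2\int_T^t b} & 0 \end{smallmatrix}\right)\tW$, a purely dispersive system whose symbol has eigenvalues $\pm\i|\xi|$. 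Since $\exp(\pm 2\int_T^t b) = 1 + O(b_0)$, a constant diagonaliser $S_0$ of $\left(\begin{smallmatrix} 0&1\\1&0 \end{smallmatrix}\right)$ brings this to $\pa_t \check W = \i|\xi|\,\diag(1,-1)\check W + \i|\xi|\mathcal R(t)\check W$ with $|\check W| \simeq |W|$, where the real matrix $\mathcal R$ has diagonal entries $O(b_0^2)$ and off-diagonal entries $\sinh(2\int_T^t b) = O(b_0)$; crucially, because $S_0$ is constant it contributes no $\pa_t S_0$ term, unlike the $\xi$-dependent diagonaliser in $Z_H$.

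The energy $|\check W|^2$ is altered only through the off-diagonal coupling of $\i|\xi|\mathcal R$, and after the substitution $\check W^\pm = e^{\pm\i|\xi|t}y^\pm$ this coupling carries the oscillation $e^{\mp 2\i|\xi|t}$. The central step is then to integrate $\pa_t|y^\pm|^2$ over the zone and integrate by parts against this oscillation: each factor $e^{\mp 2\i|\xi|\tau}$ integrates to $\mp e^{\mp 2\i|\xi|\tau}/(2\i|\xi|)$, whose $|\xi|^{-1}$ cancels the prefactor $|\xi|$. This is what defeats the naive Gronwall bound, under which one would need $\int_{Z_\Psi}(|b|+|\xi|)\,d\tau$ finite — impossible for $\al \le 0$, where the zone $[t_\xi,\infty)$ has infinite length, and false for $0<\al<1$ because $|\xi|(t_\xi-T) \sim |\xi|^{1-1/\al}$ diverges as $|\xi|\to 0$. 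The boundary contributions produced by this integration by parts are $O(b_0)|\check W|^2$ by \eqref{int_b}; the remaining integrals have their prefactor $|\xi|$ removed, involve $\pa_t\mathcal R \sim b$, and, after a second integration by parts now using the bounded primitive $\tB$ (which avoids any negative power of $|\xi|$), are reduced to zone integrals of $b^2$ and $M$. These I would bound by \eqref{Q2} and \eqref{intQ2} together with \eqref{stb_0}--\eqref{stb_inf}, and, for the matching of the zone weight at the boundary $t_\xi \sim (N/|\xi|)^{1/\al}$, by the sharp decay encoded in \eqref{eq1-Thm}, equivalently $\phi(t) \lesssim (1+t)^{-\al}$ for the function $\phi$ of Lemma \ref{Lem-QkQ2}. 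Taking $T$ large so that $b_0 = b_0(T)$ is small then closes the resulting bootstrap, yielding $|\check W(t)| \simeq |\check W(t_\ast)|$ and hence $|W(t)| \simeq |W(t_\ast)|$, with $t_\ast = T$ for $\al > 0$ and $t_\ast = t_\xi$ for $\al \le 0$.

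The main obstacle is precisely that $b$ is not absolutely integrable over $Z_\Psi$ when $\ga \le 1$, so all of the content lies in showing that the dispersive oscillation $e^{\pm\i|\xi|t}$ conspires with the generalized zero-mean structure --- the boundedness of $\tB = \int_t^\infty b$ from \eqref{int_b} and the decay of $\phi$ from \eqref{eq1-Thm} --- to render the accumulated effect over the zone uniformly bounded in $\xi$. The delicate point is the book-keeping of the repeated integrations by parts near the transition curve $t = t_\xi$, where $|\xi| \simeq (1+t_\xi)^{-\al}$, so that every surviving power of $|\xi|$ must be traded against the correct power of $(1+t_\xi)$ supplied by \eqref{eq1-Thm} and (M1); ensuring that this exchange is uniform, and that the iteration does not accumulate a logarithm at the critical exponents singled out in \eqref{al-be} (in particular $\al = 0$), is where the argument is genuinely constrained by the hypotheses.
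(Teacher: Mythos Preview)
Your scheme has a genuine gap: the double integration-by-parts you describe does not close, and in particular it never makes contact with the stabilisation hypothesis (M1), which is the only assumption that can compensate for the (possibly infinite) length of $Z_\Psi$. Trace it through. After your first IBP against $e^{\mp 2i|\xi|\tau}$ you are left with $\int e^{\mp 2i|\xi|\tau}\,b(\tau)\,g(\tau)\,d\tau$ with $g$ built from $\cosh(2B)$ and $y^-\overline{y^+}$. Writing $b=-\tilde B'$ and integrating by parts a second time gives a boundary term $O(b_0)\sup|y|^2$, but the bulk term is $\int \tilde B\,\pa_\tau\!\big(e^{\mp 2i|\xi|\tau}g\big)\,d\tau$. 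The $\tau$-derivative hits $e^{\mp 2i|\xi|\tau}$ and regenerates a factor $|\xi|$, producing $\int \tilde B\,(\mp 2i|\xi|)\,e^{\mp 2i|\xi|\tau}g\,d\tau$ --- structurally identical to what you started with, since $\tilde B$ and $\sinh(2B)$ are both merely $O(b_0)$. The iteration therefore cycles without gaining a summable factor. Meanwhile the piece where $\pa_\tau$ hits $\cosh(2B)$ contributes $\int \tilde B\cdot 2b\sinh(2B)\cdot(\text{bounded})\,d\tau$, which is $O(b_0^2)\int|b|$; for $\ga\le 1$ this diverges on $Z_\Psi$. Nothing in your outline produces the quantity $\int_0^t\big|\int_s^\infty Q_1(\si)\,d\si\big|\,ds$ that (M1) actually controls, so the hypothesis is never used.

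The paper avoids this entirely by a different reduction. Instead of gauging away $b$ and then fighting the oscillation $e^{\pm i|\xi|t}$, it changes the time variable to $\th=\int_T^t e^{-2\int_T^s b}\,ds$, which converts \eqref{FDW} into a pure wave equation $\pa_\th^2 y+a(\th)^2|\xi|^2 y=0$ with variable speed $a(\th)=\exp\big(2\int_T^{\th^{-1}(\th)}b\big)$. The decisive observation is that $a$ is close in $L^1$ to the \emph{monotone} function $\eta(\th)=\exp\big(2\int_T^\infty Q_1+2\int_T^{\th^{-1}(\th)}\sum_{k\ge 2}Q_k\big)$: indeed $|a-\eta|\simeq\big|\int_t^\infty Q_1\big|$, so (M1) gives $\int|a-\eta|\,d\th\lesssim(1+t)^\al$ directly. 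Working with $Y=\big(\pa_\th y\pm i\eta|\xi|y\big)$ then yields $\pa_\th|Y|^2\lesseqgtr\big(2\eta'/\eta\pm C|\xi||a-\eta|\big)|Y|^2$; the monotone part integrates to the bounded ratio $(\eta(\th)/\eta(\th_\ast))^2$, and the remainder integrates to $\le C|\xi|(1+t)^\al\le CN$ in $Z_\Psi$. A single application of Gronwall finishes the proof --- no IBP, no bootstrap, no smallness of $b_0$ beyond what is already fixed in Lemma~\ref{Lemma_estb}. The structural point you are missing is the split $b=Q_1+\sum_{k\ge2}Q_k$: the tail $\sum_{k\ge2}Q_k$ is nonpositive (Lemma~\ref{Lem-QkQ2}) and absolutely integrable by \eqref{intQ2}, hence contributes a monotone bounded factor, while the principal part $Q_1$ has its \emph{primitive} controlled in $L^1$ by (M1). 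Your gauge $\exp(\int_T^t b)$ lumps both pieces together and loses this decomposition.
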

\begin{proof}
Let us introduce the change of variable from $t \in [T,\infty)$ to $\th \in [0,\infty)$ by 
\begin{equation*}
  \th: = \int^t_T \exp\(-2\int^s_T b(\si)\,d\si\)\,ds.
\end{equation*}
Here we note that $\th(t)$ is strictly increasing and satisfying 
$e^{-2b_0} t \le \th(t) \le e^{2b_0} t$ by \eqref{int_b}. 
We define $a(\tau)$ and $\eta(\tau)$ by 
\[
  a(\tau):=\exp\(2\int^{\th^{-1}(\tau)}_T b(s)\,ds\)
\]
and
\[
  \eta(\tau):=\exp\(
    2\int^\infty_T Q_1(s)\,ds 
   +2\int^{\th^{-1}(\tau)}_T \sum_{k=2}^\infty Q_k(s)\,ds\). 
\]
Here we remark that 
$e^{-2b_0} \le a(\tau)$, 
$\eta(\tau) \le e^{2b_0}$ and $\eta'(\tau) \ge 0$ 
are valid by Lemma \ref{Lem-QkQ2}, \eqref{est_sumQk} and \eqref{est_int|b|}. 
By mean value theorem, \eqref{stb_0} and \eqref{set_T}, 
there exist constants $a_0>0$ and $0<\ka<1$ such that the following estimates are established: 
\begin{align*}
\int^{\th(t)}_{\th(T)} |a(\si)-\eta(\si)|\,d\si
=& \
  \int^{t}_T \left|1 - \exp\(2\int^\infty_s Q_1(\si)\,d\si\)\right|\,ds
\\
=& \ \int^{t}_T 
  \left| 2\int^\infty_s Q_1(\si)\,d\si
  \exp\(2 \ka \int^\infty_s Q_1(\si)\,d\si\)\right|\,ds
\\
\le & \ 
  2 e^{2 \ka} \int^{t}_0 
  \left| \int^\infty_s Q_1(\si)\,d\si \right|\,ds
\le a_0 (1+t)^\al.
\end{align*}
Moreover, if \eqref{stb_inf} holds for $\al \le 0$, then we have 
$\int_{\th(t)}^\infty |a(\si)-\eta(\si)|\,d\si \le a_0 (1+t)^\al$. 

By the change of variables $t \to \th$ and denoting 
$y(\th(t),\xi)=\hw(t,\xi)$, \eqref{W} is represented by 
\[
  \pa_\th  Y = B Y,\;\;
  \;\;
  Y:=\begin{pmatrix} 
    \pa_\th y + i \eta |\xi| y \\ \pa_\th y - i \eta |\xi| y
  \end{pmatrix}
\]
and
\[
  B: = 
  \frac{i|\xi|\(a^2+\eta^2\)}{2\eta}
  \begin{pmatrix} 1 & 0 \\ 0 & -1 \end{pmatrix}
+\frac{\eta'}{2\eta} \begin{pmatrix} 1 & -1 \\ -1 & 1 \end{pmatrix}
+\frac{i|\xi|\(a^2-\eta^2\)}{2\eta} 
  \begin{pmatrix} 0 & -1 \\ 1 & 0\end{pmatrix}. 
\]
Then, by Lemma \ref{Lemma_estb}, we have 
\begin{align*}
  \pa_\th|Y|^2
=& \frac{\eta'}{\eta}\(|Y_1|^2 - 2\Re\(\ol{Y_1}Y_2\) +|Y_2|^2\)
  +\frac{|\xi|\(a+\eta\)\(a-\eta\)}{\eta} 
   2\Re\(iY_1 \ol{Y_2}\)
\\
\lesseqgtr & \ \(\frac{2\eta'}{\eta} \pm 2e^{4b_0}|\xi||a-\eta|\) |Y|^2.
\end{align*}
Therefore, noting the following estimates: 
\begin{equation*}
\begin{cases}
  |\xi|\int^{\th(t)}_{\th(T)}|a(s)-\eta(s)|\,ds 
    \le a_0|\xi|(1+t)^\al \le a_0 N & (\al \ge 0),
  \\[1mm] 
  |\xi|\int^{\th(t)}_{\th(t_\xi)}|a(s)-\eta(s)|\,ds 
    \le a_0|\xi|(1+t_\xi)^\al = a_0 N & (\al < 0),
\end{cases}
\end{equation*}
by Gronwall's lemma, we have
\begin{align*}
  |Y(\th(t),\xi)|^2 
  \lesseqgtr 
  \(\frac{\eta(\th(t))}{\eta(\th(T))}\)^2
  \exp\(\pm 2a_0 N e^{4b_0}\) |Y(\th(T),\xi)|^2
  \simeq |Y(\th(T),\xi)|^2
\end{align*}
for $\al>0$ and $T \le t \le t_\xi$, and
\begin{align*}
  |Y(\th(t),\xi)|^2 
  \lesseqgtr
  \(\frac{\eta(\th(t))}{\eta(\th(t_\xi))}\)^2
  \exp\(\pm 2a_0 N e^{4b_0}\) |Y(\th(t_\xi),\xi)|^2
  \simeq |Y(\th(t_\xi),\xi)|^2
\end{align*}
for $\al \le 0$ and $t \ge t_\xi$. 
Noting the equalities 
$|Y|^2=2|\pa_\th y|^2+\eta^2|\xi|^2|y|$ and 
$a \pa_\th y = \pa_t \hw$, 
we have $2e^{-4b_0}|W| \le |Y|^2 \le 2e^{4b_0}|W|$, 
and thus we conclude the proof. 
\end{proof}

\subsection{Completion of the proof}

If $\al>0$, then by Proposition \ref{prop_estZH} and Proposition \ref{prop_estZPsi}, we have 
\begin{equation*}
\begin{cases}
  K_2^{-1} |W(T,\xi)| \le |W(t,\xi)| \le K_2 |W(T,\xi)| 
  & (T \le t \le t_\xi),
\\
  |W(t,\xi)|
  \begin{cases}
    \le K_1 |W(t_\xi,\xi)| \le K_1 K_2 |W(T,\xi)| \\
    \ge K_1^{-1}|W(t_\xi,\xi)| \ge K_1^{-1} K_2^{-1} |W(T,\xi)|
  \end{cases}
  & (t \ge t_\xi). 
\end{cases}
\end{equation*}
On the other hand, if $\al \le 0$, then we have
\begin{equation*}
\begin{cases}
  K_1^{-1} |W(T,\xi)| \le |W(t,\xi)| \le K_1 |W(T,\xi)| 
  & (T \le t \le t_\xi),
\\
  |W(t,\xi)| \le 
  \begin{cases}
    \le K_2 |W(t_\xi,\xi)| \le K_1 K_2 |W(T,\xi)| \\
    \ge K_2^{-1}|W(t_\xi,\xi)| \ge K_2^{-1} K_2^{-1} |W(T,\xi)| 
  \end{cases}
  &(t \ge t_\xi). 
\end{cases}
\end{equation*}
Consequently, since the estimate 
$E(u;b)(t) = \exp\(-2\int^\infty_t b(s)\,ds\)\|W(t,\cd)\|^2_{L^2}
  \simeq \|W(t,\cd)\|^2_{L^2}$
holds by \eqref{int_b} and Parseval's equality, we have \eqref{MGEC}.

In order to prove \eqref{eq2-Thm}, introduce the following proposition: 
\begin{proposition}\label{prop_est0T}
For any $T>0$, there exists a positive constant $K_0=K_0(T)$ such that the following estimate is established on $[0,T]$: 
\begin{equation*}
  E_{KG}(u;1)(t)
  \le K_0
  E_{KG}(u;1)(0).
\end{equation*}
\end{proposition}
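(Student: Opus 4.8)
The plan is to prove this by the classical energy method on the compact interval $[0,T]$, where the only input needed is that the potential is bounded there. Since $M$ is continuous (equivalently, by {\rm (M2)} one has $|M(t)|\lesssim (1+t)^{-2\be}$), the quantity $M_0:=\sup_{t\in[0,T]}|M(t)|$ is finite, and this single fact is what makes the argument work uniformly on $[0,T]$.

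To keep the differentiation rigorous I would argue frequency-wise, as in the rest of the proof. After the partial Fourier transform the equation reads $\pa_t^2 \hu + (|\xi|^2 + M(t))\hu = 0$, and I set the frequency-localized energy $e(t,\xi):=|\pa_t\hu(t,\xi)|^2 + (|\xi|^2+1)|\hu(t,\xi)|^2$, so that $E_{KG}(u;1)(t)=\int_{\R^n} e(t,\xi)\,d\xi$ by Parseval's equality. Differentiating in $t$ and substituting the equation, the two $|\xi|^2$-terms cancel after using $\Re(\ol{\pa_t\hu}\,\hu)=\Re(\ol{\hu}\,\pa_t\hu)$, which leaves
\begin{equation*}
  \pa_t e(t,\xi) = 2\(1-M(t)\)\Re\(\ol{\hu}\,\pa_t\hu\).
\end{equation*}

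From here the estimate is immediate. Using $2|\hu||\pa_t\hu|\le |\hu|^2+|\pa_t\hu|^2 \le e(t,\xi)$ (the last step since $|\xi|^2+1\ge 1$), one gets $|\pa_t e(t,\xi)| \le (1+M_0)\,e(t,\xi)$, so Gronwall's inequality gives $e(t,\xi)\le e^{(1+M_0)t}e(0,\xi)$ pointwise in $\xi$. Integrating over $\xi$ yields $E_{KG}(u;1)(t)\le e^{(1+M_0)T}E_{KG}(u;1)(0)$ for all $t\in[0,T]$, i.e.\ the claim with $K_0=K_0(T)=e^{(1+M_0)T}$.

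There is no genuine obstacle here; the statement is just the continuous dependence of the energy on a finite time interval, and the decay structure of $M$ encoded in {\rm (M1)}--{\rm (M3)} plays no role. The only point to be careful about is the legitimacy of differentiating $e(t,\xi)$ in $t$, which is precisely why I would phrase the computation in the Fourier variable: for each fixed $\xi$ the solution is smooth in $t$, and dominated convergence justifies differentiating under the integral sign, rather than manipulating $\|\nabla u\|_{L^2}$, $\|\pa_t u\|_{L^2}$ and $\|u\|_{L^2}$ directly.
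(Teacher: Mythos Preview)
Your proof is correct and follows essentially the same route as the paper: differentiate the energy, obtain $\pa_t E_{KG}(u;1)=2(1-M(t))\Re(u,\pa_t u)_{L^2}$, bound the cross term by $E_{KG}(u;1)$ via $2ab\le a^2+b^2$, and conclude by Gronwall with the constant $K_0=\exp\bigl(T\sup_{[0,T]}|1-M|\bigr)$. The only cosmetic difference is that you carry out the computation pointwise in $\xi$ after Fourier transform, whereas the paper writes it directly in $L^2$; the paper also tacks on the observation $E(u;b)(t)\lesssim E_{KG}(u;1)(t)$ on $[0,T]$ (after extending $b$ continuously to $[0,T)$), which is not part of the proposition itself but is what links the proposition to \eqref{eq2-Thm}.
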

\begin{proof}
We extend $b(t)$ on $[0,T)$ as $b \in C^0([0,\infty))$ and $|b|$ is monotone decreasing. 
By Cauchy-Schwarz inequality, we have 
\begin{align*}
  \frac{d}{dt}E_{KG}(u;1)(t)
= (1-M(t))\Re\(u(t,\cd),\pa_t u(t,\cd)\)_{L^2}
\le |1-M(t)| E_{KG}(u;1)(t).
\end{align*}
Therefore, by \eqref{M0} and Gronwall's inequality, we have 
\begin{align*}
  E(u;b)(t)
\le& \ 
 \|u(t,\cd)\|_{L^2}^2 +2\|\pa_t u(t,\cd)\|^2 
    +2b(T)^2\|u(t,\cd)\|_{L^2}^2
\\
\simeq & \ E_{KG}(u;1)(t)
  \le \exp\(T \sup_{0\le t \le T}\{|1-M(s)|\}\) E_{KG}(u;1)(0). 
\end{align*}
for any $t \in [0,T]$. 
\end{proof}
Thus \eqref{eq2-Thm} is proved by combining Proposition \ref{prop_est0T} and \eqref{MGEC}. 



\end{document}